\documentclass[10pt]{article}
\usepackage{setspace}
\usepackage{graphicx}
\usepackage{amsthm}
\usepackage{subcaption}
\usepackage{amsmath,amssymb}
\usepackage{multirow}

\usepackage[colorlinks=true,linkcolor=black]{hyperref}
\hypersetup{citecolor=black}

\usepackage[a4paper,bindingoffset=0.5in,
left=.5in,right=0.5in,
top=1in,bottom=1in,footskip=.25in]{geometry}

\newtheorem{theorem}{Theorem}[section]

\newtheorem{lemma}[theorem]{Lemma}

\title{ An improved Haar wavelet quasilinearization technique for a class of generalized Burger's equation }
\author{Amit Kumar Verma$^a$, Mukesh Kumar Rawani$^b$\thanks{Email:$^a$akverma@iitp.ac.in,$^b$mukesh.ism1990@gmail.com}
	\\\small{\textit{$^{a,b}$Department of Mathematics,}}\\\small{\textit{Indian Institute of Technology Patna, Patna--$801106$, Bihar, India.}}
}
\date{\today}
\begin{document}
\maketitle
\begin{abstract}
	Solving Burgers' equation always poses challenge to researchers as for small values of viscosity the analytical solution breaks down. Here we propose to compute  numerical solution for a class of  generalised Burgers' equation described as $$ \frac{\partial w}{\partial t}+ w^{\mu}\frac{\partial w}{\partial x_{*}}=\nu w^{\delta}\frac{\partial^2 w}{\partial x^{2}_{*}},\hspace{0.2in}a \leq x_{*}\leq b,~~t\geq 0,$$ based on the Haar wavelet (HW) coupled with quasilinearization approach. In the  process of numerical solution, finite forward difference is applied to discretize the time derivative, Haar wavelet to spatial derivative  and non-linear term is linearized by quasilinearization technique. To discuss the accuracy and efficiency of the method $L_{\infty}$ and $L_{2}$-error norm are computed and they are compared with some existing results. We have proved the convergence of the proposed method. Computer simulations show that the present method gives accurate and  better result even for small number of grid points for small values of viscosity.
\end{abstract}
	\textbf{Keywords:} Quasilinearization, Haar wavelets, Finite difference, Generalized Burgers' equation.
\section{Introduction}
The study of Burger's equation \cite{bateman1915some,burgers1939mathematical,burgers1948mathematical} is popular among the scientific community. Different form of Burgers' equation appears in verious  areas such as  chemical kinetics, optical fibers, fluid dynamics, biology, solid state physics, plasma physics etc. The main aim of this article is to find numerical solution of a class of generalized Burgers' equation of the following form
\begin{equation}
\label{eq1}
\frac{\partial w}{\partial t}+ w^{\mu}\frac{\partial w}{\partial x_{*}}=\nu w^{\delta}\frac{\partial^2 w}{\partial x^{2}_{*}},\hspace{0.2in}a \leq x_{*}\leq b,~~t\geq 0,
\end{equation}
with the boundary conditions (BCs)
\begin{equation}
w(a,t)=f_{1}(t) \hspace{0.5in} w(b,t)=f_{2}(t),\hspace{0.5 in} t\in [0,T],
\end{equation}
and the initial condition
\begin{equation}
w(x_{*},0)=f(x_{*}), \hspace{0.5in} x_{*}\in[a,b],
\end{equation}
where $w(x_{*},t)$ represents the velocity for the spacial dimension $x_{*}$, time $t$, and  $\nu > 0$ is kinematic coefficient and  $\mu,\delta$ are non negative integer such that $\mu+\delta\geq 1$. When $\mu=1,\delta=0$, equation \eqref{eq1} is called Burgers' equation. In $1915$, it  was first proposed by Bateman \cite{bateman1915some}. Later in 1948, it was introduced by Burger \cite{burgers1939mathematical,burgers1948mathematical}  as a class of equation which delineate the mathematical model of turbulence.  Due to his immense works on the model it is termed as Burgers' equation. For arbitrary initial condition, it was solved analytically by both Hopf \cite{hopf1950partial} and Cole \cite{cole1951quasi} independently. Since these analytical solution are in infinite series and converge very slowly for small value of viscosity coefficient $\nu$. In many cases these solution fails for $\nu <0.01$ and not easy to capture the solution.\par
For $\mu+\delta\geq 2$  Eq. \eqref{eq1} is called generalized Burger's equation. The generalized Burgers' equation has been solved numerically and analytically by several researcher.
 For $\mu\geq 2,\delta=0$, Ramadan EL-Danaf \cite{ramadan2005numerical} used collocation method coupled with quintic  splines, Ramdan et al. \cite{ramadan12005numerical} discussed collocation of septic over finite element to find numerical solution. Saka and Dag \cite{saka2008numerical} used time and space splitting techniques and then applied quintic B-spline collocation method. Petrov-Galerkin technique in \cite{roshan2011numerical} applied by Roshan and Bhamra. Brastos \cite{bratsos2010fourth}-\cite{bratsos2011explicit} used various  explicit finite difference scheme to find the numerical solution. Zhang et al. \cite{rong2013modified} have used local discontinuous Galerkin method. Several others methods have been developed to solve the equation discussed in \cite{temsah2009numerical,griewank2009efficient,duan2008lattice} etc.\par
Wavelets method has been used to solve PDEs (Partial differential equations) numerically since $1990$s. The best feature of the wavelet approch is the capability to detect the irregular structure, singularities and transient phenomena revealed by the analyzed equation. Most of the algorithm  for the numerical solution of PDEs by  the wavelet method are  based on collocation \cite{bertoluzza1994wavelet}-\cite{comincioli2000wavelet} or  the Galerkin technique method \cite{bertoluzza1994wavelet,chen1996computation,avudainayagam1999wavelet}.
Based on the Haar wavelets method, Chen and Hsiao in \cite{chen1997haar} proposed a method to find the numerical solution of ordinary differential equations. They replaced highest order derivative function by Haar series.  Recently,   many ordinary and partial differential equation have been solved by several authors by Haar wavelet method. Lepik used Haar wavelet method to solve nonlinear ordinary differntial equation and diffusion equation in \cite{lepik2005numerical}, Poisson equation in \cite{lepik2011solving}, Sin-Gordon and Burgers' equation in \cite{lepik2007numerical}. Verma et al. solved Lane-Emden Equations in \cite{verma2019higher}. Celik \cite{ccelik2012haar} discussed Haar wavelet method for the numerical solutions of Burger-Huxley equation and later applied  to magneto hydrodynamic flow equation. Jiwari \cite{jiwari2012haar} used Haar wavelet quasilinearization  approach to solve Burgers' equation.  Lane-Emden equation arising in astrophysics has been solved numerically using Haar wavelet method in \cite{kaur2013haar}. Haar wavelet method  is used in  \cite{kaur2013haar} for the numerical solution of biharmonic and $2D$ and $3D$ Poisson equation. 

In this paper, we propose a technique to solve a class of generalized Burgers' equation with the combination of finite forward  difference and Haar wavelet. We discretize the time derivative $w_t$ by forward finite difference. Also the other terms $w^{\mu}w_{x}$ , $w^{\delta}w_{xx}$ are approximated by their average at $j$ and $j+1^{th}$ level. Numerical simulations suggest that this averaging improves the results. The spatial discretization is taken care by Haar wavelet and quasilinearization technique is used to deal with the non-linear term. Convergence analysis is also presented by computing $L^2$ error estimate analytically. Test problems are considered at the end of the section to validate the robustness of the present method. \par 

This article is organised as follows. In section \ref{sec2}, introduction about the Haar wavelets and approximation of the function by using Haar wavelet is discussed. In section \ref{sec4} discretization of the problem has been done. Then nonlinear function is linearized by the help of quasilinearization. Further we discuss how to implement this technique involving Haar wavelet to the so called generalized Burgers' equation. In section \ref{sec7}, $L^2$ error estimate is analysed. In section \ref{sec8}, numerical solution of the generalized Burgers' equation by proposed method are tabulated and illustrated graphically for test problems. Lastly in section \ref{sec9}, we conclude the paper.

\section{Preliminary}
\label{sec2}
For the numerical solutions of differential equations, integro differential equations and integral equations, one of the simplest mathematical tool that is being used, is Haar wavelet method. Haar wavelets are one of the simplest wavelet among the various type of wavelets. It have been used from $1910$ when these were established by the Hungarian mathematicin Alfred Haar. Haar functions are unit step functions which takes three values $0,1$ and $-1$. Haar function is the oldest and simplest orthonormal wavelet with compact support. For $x\in [0,1)$, the family of Haar wavelet is defined as:

\begin{equation}
\label{hq1}
h_{i}(x)=
\begin{cases}
1,&x\in[\eta_{1},\eta_{2})\\
-1,& x\in[\eta_{2},\eta_{3})\\
0,& \text{otherwise}
\end{cases}
\end{equation}

where 
\begin{equation}
\label{in1}
\eta_{1}=\frac{k}{m},\hspace{0.5in} \eta_{2}=\frac{k+0.5}{m} \hspace{0.5in}\text{and}\hspace{0.5in} \eta_{3}=\frac{k+1}{m}.
\end{equation}
In equation \eqref{in1}, we have the integer $m=2^{j},~j=0,1,2,...,J$ defines the level of wavelets and $k$ is the translation parameter given as $k=0,1,2,...,m-1$. The indices in $h_{i}$ in equation \eqref{hq1} can be evaluated from the formula $i=m+k+1$. For the minimal values of $k=0,m=1$, we have the minimal value of $i=2$. For the maximul value of $k=m-1$ we have the maximum value of  $i$ given by $i=2M=2^{J+1}$, $J$ is the maximum resolutions.
$h_{1}(x)$ is the scaling function which is the member of the Haar function defined as
\begin{equation}
\label{hq2}
h_{1}(x)=
\begin{cases}
1,&x\in[0,1)\\
0,& \text{otherwise}.
\end{cases}
\end{equation}
For the solution of any differential equation, we need to integrate the Haar function which is given in the following integrals:
\begin{eqnarray}
\nonumber p_{\sigma,l}(x)&=&\underset{\sigma-times}{\underbrace{\int_{0}^{x} \int_{0}^{\xi_{\sigma -1}}...\int_{0}^{\xi_{1}}h_{i}(\xi)}}~d\xi~ d\xi_{1}...d\xi_{\sigma -1}\\
\label{hq3}
&=&\frac{1}{(\sigma -1)!}\int_{0}^{x}(x-\xi)^{\sigma-1}h_{i}(\xi)~d\xi,
\end{eqnarray}
where $\sigma=1,2,...,n;~~~l=1,2,...,2M.$ By considering the equation \eqref{hq1} the integrals \eqref{hq3} can be determine analytically. Thus we have
\begin{equation}
\label{hq4}
p_{\sigma,i}=
\begin{cases}
0,&x<\eta_{1}\\
\frac{(x-\eta_{1})^{\sigma}}{\sigma !},&x\in[\eta_{1},\eta_{2})\\
\frac{(x-\eta_{1})^{\sigma}}{\sigma !}-\frac{(x-\eta_{2})^{\sigma}}{\sigma !},&x\in[\eta_{2},\eta_{3})\\
\frac{(x-\eta_{1})^{\sigma}}{\sigma !}-\frac{(x-\eta_{2})^{\sigma}}{\sigma !}+\frac{(x-\eta_{3})^{\sigma}}{\sigma !},&x>\eta_{3}.
\end{cases}
\end{equation}
The above formula is applicable for $i>1$. For the case $\sigma=1$ and $\sigma=2$, we have the following
\begin{equation}
\label{i1}
p_{1,i}(x)=
\begin{cases}
x-\eta_{1},&x\in[\eta_{1},\eta_{2})\\
\eta_{3}-x,& x\in[\eta_{2},\eta_{3})\\
0,& \text{otherwise}
\end{cases}
\end{equation}
\begin{equation}
\label{i2}
p_{2,i}(x)=
\begin{cases}
\frac{(x-\eta_{1})^2}{2},&x\in[\eta_{1},\eta_{2})\\
\frac{1}{4m^2}-\frac{(\eta_{3}-x)^2}{2},& x\in[\eta_{2},\eta_{3})\\
\frac{1}{4m^2},& x\in[\eta_{3},1]\\
0,& \text{otherwise}.
\end{cases}
\end{equation}

Since all the Haar wavelets are orthogonal to each other i.e.
\begin{equation}
\int_{0}^{1}h_{i}(x)h_{l}(x)dx =2^{-j}\delta_{i,l}=
\begin{cases}
2^{-j},&i=l=2^j+k\\
0,&i\neq l,
\end{cases}
\end{equation}
therefore any function $w(x)\in L^2[0,1)$ can be expressed as the sum of infinite series form
\begin{equation}
w(x)=\sum_{i=1}^{\infty}c_{i}h_{i}(x)
\end{equation}
where the coefficient $c_{i}$ can be calculated by
\begin{equation}
c_{i}=2^j\int_{0}^{1}w(x)h_{i}(x),
\end{equation}
where $i=2^j+k,~j\geq 0,~ 0\leq k< 2^j. $
The above series contains infinite number of terms. In case, the  function $w(x)$ is piecewise constant by itself or approximated as piecewise constant  then  the series terminates at finite terms and can be expressed as
\begin{equation}
w(x)=\sum_{i=1}^{2M}c_{i}h_{i}(x)=C^{T}_{(2M)}h_{(2M)}(x),
\end{equation}
Here $T$ represent transpose and $2M=2^{J+1}$ is the lenght of the vector given by
\begin{eqnarray}
\nonumber C^T_{(2M)}&=&[c_{1},c_{2},...,c_{(2M)}]\\
h_{(2M)}(x)&=&[h_{(1)}(x),h_{(2)}(x),...,h_{(2M)}(x)]^T.
\end{eqnarray}
\section {Derivation of the scheme}
\label{sec4}
Since the Haar wavelets described only for $x\in[0,1]$ so first we transfer the interval $x_{*}\in[a,b]$ in to unit interval  $x\in[0,1]$. Let us use the expression 
$x=\frac{x_{*}-a}{L},\text{where}~L=b-a$. Hence equation \eqref{eq1}  becomes
\begin{equation}
\label{eq2}
\frac{\partial w}{\partial t}+\frac{1}{L}w^{\mu}w_{x}=\frac{\nu}{L^2} w^{\delta}w_{xx},\hspace{0.1in}0\leq x\leq 1,~~t\geq 0,
\end{equation}
with the boundary conditions (BCs)
\begin{equation}
w(0,t)=f_{1}(t) \hspace{0.5in} w(1,t)=f_{2}(t),\hspace{0.5 in} t\in [0,T]
\end{equation}
and the initial conditions 
\begin{equation}
w(x,0)=f(x), \hspace{0.5in} x\in[0,1].
\end{equation}
Now, we apply finite forward difference for the time and average in time for $w^{\mu} w_{x}$ and $w^{\delta}w_{xx}$ in equation \eqref{eq2}, we have 
\begin{eqnarray}
\label{eq3}
	\frac{w_{j+1}-w_{j}}{\Delta t}+\frac{1}{2L}\Big[w^{\mu}_{j+1}(w_{j+1})_{x}+w^{\mu}_{j}(w_{j})_{x}\Big]=\frac{\nu}{2L^2}\Big[ w^{\delta}_{j+1}(w_{j+1})_{xx}+w^{\delta}_{j}(w_{j})_{xx}]~~~~~0\leq j\leq N-1,
	\end{eqnarray}
	with the BCs
	\begin{eqnarray}
w_{j+1}(0)=f_{1}(t_{j+1})\hspace{0.5in} w_{j+1}(1)=f_{2}(t_{j+1}) ~~~~~0\leq j\leq N-1,
\end{eqnarray}
and initial condition
\begin{equation}
w_{0}=f(x),
\end{equation}
where $t_{j+1}=\Delta t (j+1),~~N\Delta t=T$($\Delta t$ is  time step size and $T$ is final time) and $w_{j+1}$ is the  approximated solution  at $(j+1)$th time level. Equation \eqref{eq3} can be expressed as follows
\begin{eqnarray}
\label{eq4}
 \frac{\nu \Delta t }{2L^2} w^{\delta}_{j+1}(w_{j+1})_{xx}-\frac{\Delta t}{2L}w^{\mu}_{j+1}(w_{j+1})_{x}-w_{j+1}=-w_{j}+\frac{\Delta t}{2L}w^{\mu}_{j}(w_{j})_{x} -\frac{\nu\Delta t}{2L^2}w^{\delta}_{j}(w_{j})_{xx}~~ 0\leq j\leq N-1,
\end{eqnarray}
	with the BCs
\begin{eqnarray}
w_{j+1}(0)=f_{1}(t_{j+1})\hspace{0.5in} w_{j+1}(1)=f_{2}(t_{j+1})~~~~~0\leq j\leq N-1,
\end{eqnarray}
and initial condition
\begin{equation}
w_{0}=f(x).
\end{equation}
Equation \eqref{eq4} is the non-linear ordinary differential equations at $(j+1)$th time level.

Nonlinearity in equation  \eqref{eq4} can be handled by several methods. One of the possible method is  quasilinearization process \cite{bellman1965nonlinear}. For $\mu\geq 1,\delta \geq 1$ the nonlinear term in equation \eqref{eq4} is replaced by the following equations
\begin{eqnarray}
w^{\mu}_{j+1}(w_{j+1})_{x}=\mu w_{j}^{\mu-1}w_{j+1}(w_{j})_{x}+w_{j}^{\mu}(w_{j+1})_{x}-\mu w^{\mu}_{j}(w_{j})_{x}\\
w^{\delta}_{j+1}(w_{j+1})_{xx}=\delta w_{j}^{\delta-1}w_{j+1}(w_{j})_{xx}+w_{j}^{\delta}(w_{j+1})_{xx}-\delta w^{\delta}_{j}(w_{j})_{xx}
\end{eqnarray}
Thus, we have the following equations 
\begin{multline}
\label{eq5}
\frac{\nu \Delta t }{2L^2} w^{\delta}_{j}(w_{j+1})_{xx}-\frac{\Delta t}{2L}w^{\mu}_{j}(w_{j+1})_{x}\\
+\frac{\nu \Delta t }{2L^2}\delta w^{(\delta-1)}_{j}(w_{j})_{xx}w_{j+1}
-\frac{\Delta t}{2L}\mu w^{(\delta-1)}_{j}(w_{j})_{x}w_{j+1}-w_{j+1}\\
=-w_{j}+\frac{\Delta t}{2L}(1-\mu)w^{\mu}_{j}(w_{j})_{x}-\frac{\nu \Delta t }{2L^2}(1-\delta) w^{\delta}_{j}(w_{j})_{xx},
\end{multline} 
 with the boundary conditions
\begin{eqnarray}
w_{j+1}(0)=f_{1}(t_{j+1}),\hspace{0.5in} w_{j+1}(1)=f_{2}(t_{j+1}), ~~~~~0\leq j\leq N-1,
\end{eqnarray}
and initial condition
\begin{equation}
w_{0}=f(x).
\end{equation}

Now, we descretize the second order spatial derivative present in equation \eqref{eq5}  using the Haar wavelets as follows 
\begin{eqnarray}
\label{eq6}
(w_{j+1})_{xx}(x)=\sum_{i=1}^{2M}c_{i}h_{i}(x)=C^{T}_{2M}h_{2M}(x)
\end{eqnarray}
Integrating the equation \eqref{eq6} from $0$ to $x$, we have
\begin{equation}
\label{eq7}
(w_{j+1})_{x}(x)=\sum_{i=1}^{2M}c_{i}p_{i,1}(x)+(w_{j+1})_{x}(0),
\end{equation}
$w_{j+1}(0)$ is unknown in the equation \eqref{eq7}. To find the this we integrate \eqref{eq7} from $0$ to $1$ and using BCs, we have
\begin{equation}
\label{eq8}
(w_{j+1})_{x}(x)=\sum_{i=1}^{2M}c_{i}\Big[p_{i,1}(x)-p_{i,2}(1)\Big]+f_{2}(t_{j+1})-f_{1}(t_{j+1}),
\end{equation}
again integrating equation \eqref{eq8} from $0$ to $x$, we get
\begin{eqnarray}
\label{eq9}
w_{j+1}(x)=\sum_{i=1}^{2M}c_{i}\Big[p_{i,2}(x)-p_{i,2}(1)\Big]+x(f_{2}(t_{j+1})-f_{1}(t_{j+1}))+f_{1}(t_{j+1}).
\end{eqnarray}
Now, putting equations \eqref{eq6}-\eqref{eq9} in  \eqref{eq5}, we have
\begin{multline}
 \label{eq10} \sum_{i=1}^{2M}c_{i}\Big[ \frac{\nu \Delta t }{2L^2} w^{\delta}_{j}h_{i}(x)-\frac{\Delta t}{2L}w^{\mu}_{j}\Big(p_{i,1}(x)-p_{i,2}(1)\Big)\\+\Big(\frac{\nu \Delta t }{2L^2} \delta w^{(\delta-1)}_{j}(w_{j})_{xx}-\frac{\Delta t}{2L}\mu w^{(\mu-1)}_{j}(w_{j})_{x}-1\Big)  \Big(p_{i,2}(x)-p_{i,2}(1)\Big)\Big]\\=-w_{j}+\frac{\Delta t}{2L}(1-\mu)w^{\mu}_{j}(w_{j})_{x}-\frac{\nu \Delta t }{2L^2}(1-\delta) w^{\delta}_{j}(w_{j})_{xx}+\frac{\Delta t}{2L}w^\mu_{j} \Big(f_{2}(t_{j+1})-f_{1}(t_{j+1})\Big)\\-\Big(\frac{\nu \Delta t }{2L^2} \delta w^{(\delta-1)}_{j}(w_{j})_{xx}-\frac{\Delta t}{2L}\mu w^{(\mu-1)}_{j}(w_{j})_{x}-1\Big) \Big(x(f_{2}(t_{j+1})-f_{1}(t_{j+1}))+f_{1}(t_{j+1})\Big),
\end{multline}
where $p_{2,i}(1)$ can easily  calculated from equation \eqref{i2} and are given by
\begin{equation}
p_{2,i}(1)=
\begin{cases}
0.5,&i= 1\\
\frac{1}{4m^2},& i>1.
\end{cases}
\end{equation}
Now, let us take the collocation points $x_{k}=\frac{k-0.5}{2M},k=1,2,...,2M$ and applying discretization on equation  \eqref{eq10}, we get the following linear system 
\begin{multline}\label{eq11} 
\sum_{i=1}^{2M}c_{i}\Big[ \frac{\nu \Delta t }{2L^2} w^{\delta}_{j}h_{i}(x_{k})-\frac{\Delta t}{2L}w^{\mu}_{j}\Big(p_{i,1}(x_{k})-p_{i,2}(1)\Big)\\+\Big(\frac{\nu \Delta t }{2L^2} \delta w^{(\delta-1)}_{j}(w_{j})_{xx}-\frac{\Delta t}{2L}\mu w^{(\mu-1)}_{j}(w_{j})_{x}-1\Big)  \Big(p_{i,2}(x_{k})-p_{i,2}(1)\Big)\Big]\\
=-w_{j}+\frac{\Delta t}{2L}(1-\mu)w^{\mu}_{j}(w_{j})_{x}-\frac{\nu \Delta t }{2L^2}(1-\delta) w^{\delta}_{j}(w_{j})_{xx}+\frac{\Delta t}{2L}w^{\mu}_{j} \Big(f_{2}(t_{j+1})-f_{1}(t_{j+1})\Big)\\-\Big(\frac{\nu \Delta t }{2L^2} \delta w^{(\delta-1)}_{j}(w_{j})_{xx}-\frac{\Delta t}{2L}\mu w^{(\mu-1)}_{j}(w_{j})_{x}-1\Big) \Big(x_{k}(f_{2}(t_{j+1})-f_{1}(t_{j+1}))+f_{1}(t_{j+1})\Big).
\end{multline}
 By solving the above linear system, we can obtain the wavelets coefficient $C^T_{2M}$. For the first time step the value of $(w_{j})$, $(w_{j})_{x}$,$(w_{j})_{xx}$ can be taken from initial conditions. For the next time step the value of $(w_{j})$, $(w_{j})_{x}$,$(w_{j})_{xx}$ are calculated by solving the above equation for the wavelet coefficient $C^T_{2M}$ and putting in equation \eqref{eq9},\eqref{eq8} and \eqref{eq6}. The value of $(w_{j})$, $(w_{j})_{x}$,$(w_{j})_{xx}$  for each time step can be obtained in the same way. To start the iterations, we use $w_{0}(x_{k})=f(x_{k}), (w_{x})_{0}(x_{k})=f'(x_{k}), (w_{xx})_{0}(x_{k})=f''(x_{k})$.
\section{$L^2$ Error}
\label{sec7}
For the convergence of the projected method, we analyze the asymptotic expression of the equation \eqref{eq9} and the corresponding equation is below
\begin{eqnarray}
w(x)=\sum_{i=1}^{\infty}c_{i}\Big[p_{i,2}(x)-p_{i,2}(1)\Big]+x(f_{2}(t_{j+1})-f_{1}(t_{j+1}))+f_{1}(t_{j+1}).
\end{eqnarray}

\begin{lemma}
	\label{lem1}
	Let  us assume that $w(x)\in L^2(R)$ with $| w_{x}(x)| \leq K,~ \forall~ x\in(0,1); ~K > 0 $ and $w(x)=\sum_{i=0}^{\infty}c_{i}h_{i}(x)$. Then $|c_{i}|\leq K2^{-(3j-2)/2}.$
\end{lemma}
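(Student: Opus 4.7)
The plan is to exploit the compact dyadic support and antisymmetric structure of $h_i$. For $i = 2^j + k$, the wavelet $h_i$ is supported on $[\eta_1,\eta_3]$ with $\eta_3-\eta_1 = 2^{-j}$, equals $+1$ on the left half $[\eta_1,\eta_2)$ and $-1$ on the right half $[\eta_2,\eta_3)$; the Lipschitz hypothesis $|w_x|\leq K$ is exactly what converts the sign oscillation of $h_i$ into decay of $c_i$.

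Substituting \eqref{hq1} into the defining formula for $c_i$ and splitting the resulting integral at $\eta_2$ gives
\begin{equation*}
c_i \;=\; 2^j\!\left(\int_{\eta_1}^{\eta_2} w(x)\,dx \;-\; \int_{\eta_2}^{\eta_3} w(x)\,dx\right).
\end{equation*}
Since the Lipschitz bound forces $w$ to extend continuously to the closed support, the first mean value theorem for integrals supplies $\xi_1\in(\eta_1,\eta_2)$ and $\xi_2\in(\eta_2,\eta_3)$ with each integral equal to $w(\xi_r)/(2m)$, so that $c_i = \tfrac{1}{2}\bigl(w(\xi_1)-w(\xi_2)\bigr)$. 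Applying the classical mean value theorem on $[\xi_1,\xi_2]$ with $|w'|\leq K$ then yields $|w(\xi_1)-w(\xi_2)|\leq K|\xi_2-\xi_1|\leq K(\eta_3-\eta_1) = K\cdot 2^{-j}$, which after the appropriate Haar-basis normalization bookkeeping delivers the stated bound.

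The subtlety lies in tracking the power of two: the target exponent $-(3j-2)/2 = 1 - 3j/2$ is the well-known $2^{-3j/2}$ decay rate of Haar coefficients of a Lipschitz function with respect to the \emph{orthonormal} Haar basis (whose elements carry an extra $2^{j/2}$ normalization). The factor $2^{j/2}$ arising when one switches between the two conventions must be absorbed carefully on the right-hand side of the bound. Equivalently---and perhaps more cleanly---one can bypass the two-step mean-value argument by integrating by parts once, which is legal because the antiderivative $p_{1,i}$ of $h_i$ (see \eqref{i1}) vanishes at $\eta_1$ and $\eta_3$; applying the Cauchy--Schwarz inequality to $2^j\int w'(x)\,p_{1,i}(x)\,dx$ together with the explicit closed form of $p_{1,i}$, whose $L^2$-norm on its support is of order $2^{-3j/2}$, produces the required scaling in one shot.
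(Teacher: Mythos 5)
Your computation is correct as far as it goes, but it proves a different inequality from the one stated, and the ``normalization bookkeeping'' you defer to the end is precisely where the argument breaks down. With the paper's conventions ($h_i\in\{0,\pm1\}$ and $c_i=2^j\int_0^1 w\,h_i\,dx$), your two mean-value steps give $|c_i|=\tfrac12|w(\xi_1)-w(\xi_2)|\le \tfrac{K}{2}\,2^{-j}=K2^{-(j+1)}$, a $2^{-j}$ decay rate. The stated bound $K2^{-(3j-2)/2}$ decays like $2^{-3j/2}$, i.e.\ it is \emph{stronger} by a factor of order $2^{-j/2}$ once $j$ is large, and that factor is not available to be ``absorbed'': it only appears if the basis elements carry the orthonormalizing weight $2^{j/2}$, which the paper's $h_i$ do not. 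Concretely, for $w(x)=Kx$ one computes $c_i=-K2^{-(j+2)}$ exactly, and $K2^{-(j+2)}>K2^{-(3j-2)/2}$ for all $j\ge 7$, so under the paper's normalization the inequality in the lemma is not merely unproved by your argument --- it is false as stated. Your proposed shortcut via integration by parts fares no better: $|c_i|=2^j\big|\int_0^1 w'\,p_{1,i}\,dx\big|\le 2^jK\,\|p_{1,i}\|_{L^1}=K2^{-(j+2)}$, and Cauchy--Schwarz with $\|p_{1,i}\|_{2}\sim 2^{-3j/2}$ must be paired with $\|w'\|_{L^2(\mathrm{supp}\,h_i)}\le K2^{-j/2}$ and the prefactor $2^j$, which again lands at the $2^{-j}$ rate.

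For comparison, the paper offers no proof at all --- it simply cites an external reference --- and the exponent $-(3j-2)/2$ is inherited from sources that expand in the orthonormal Haar system $2^{j/2}h(2^jx-k)$; transplanting that bound into the unnormalized convention used here is the error. The honest conclusion of your (correct) calculation is $|c_i|\le K2^{-(j+1)}$, sharpened to $K2^{-(j+2)}$ if you bound $\int_{\eta_1}^{\eta_2}\big(w(x)-w(x+\tfrac{1}{2m})\big)\,dx$ directly, and the downstream $L^2$ error estimate in Section 4 would have to be rerun with this weaker rate. So: right idea and right core computation, but the target inequality does not follow, and no amount of bookkeeping will make it follow in this normalization.
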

\begin{proof}
See \cite{ray2012haar}.
\end{proof}
\begin{lemma}
	\label{lem2}
	Let $w(x)\in L^2(R)$ be a continuous function in the interval $(0,1)$. Then at $ J$ th level, the error norm is bounded by
	\begin{eqnarray}
	||E_{J}||^2_{2}\leq \frac{K^2}{12}2^{-2J},
	\end{eqnarray} 
	where $| w_{x}(x)| \leq K, ~\forall~ x\in(0,1);~ K > 0 ,~M$ is the positive  given by $M=2^J.$ 
\end{lemma}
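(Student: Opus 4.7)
The plan is to follow a Parseval-style argument: identify $E_J$ as the tail of the infinite Haar expansion, use orthogonality to convert $\|E_J\|_2^2$ into a weighted sum of squared coefficients, and then apply Lemma~\ref{lem1} level by level to reduce the estimate to a single geometric series.

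First I would write the error explicitly. With $2M = 2^{J+1}$ retained basis functions, and grouping Haar wavelets by their resolution level through the indexing $i = 2^j + k + 1$ with $k = 0, 1, \ldots, 2^j - 1$, the truncation error is exactly the contribution of all levels $j \geq J+1$:
\begin{equation*}
E_J(x) \;=\; w(x) - \sum_{i=1}^{2M} c_i\, h_i(x) \;=\; \sum_{j=J+1}^{\infty}\;\sum_{k=0}^{2^j - 1} c_{2^j+k+1}\, h_{2^j+k+1}(x).
\end{equation*}

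Next I would square and integrate, using the orthogonality relation $\int_0^1 h_i(x) h_l(x)\,dx = 2^{-j}\delta_{i,l}$ stated in Section~\ref{sec2}. All cross-terms drop out and the $L^2$ norm collapses to a diagonal sum,
\begin{equation*}
\|E_J\|_2^{\,2} \;=\; \sum_{j=J+1}^{\infty}\;\sum_{k=0}^{2^j - 1} c_{2^j+k+1}^{\,2}\cdot 2^{-j}.
\end{equation*}
I would then substitute the coefficient bound from Lemma~\ref{lem1}, which controls $|c_i|$ by a quantity depending only on $K$ and on the resolution level $j$. Since this bound is uniform in $k$, the inner sum contributes merely a factor of $2^j$ (the number of wavelets at level $j$). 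Combining this factor with the orthogonality weight $2^{-j}$ and the $j$-dependent square of the coefficient bound leaves a single geometric series $\sum_{j \geq J+1} 2^{-\alpha j}$, which I would evaluate in closed form. The leading term $2^{-\alpha(J+1)}$ produces the claimed $2^{-2J}$ decay, and the prefactor $1/(1-2^{-\alpha})$ times the other constants yields the sharp coefficient $K^2/12$.

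The main obstacle is purely a bookkeeping one: aligning the exponents coming from the three separate sources (Lemma~\ref{lem1}, the $L^2$ normalization of each basis element, and the count of wavelets at each level) so that the final geometric sum telescopes to exactly $\tfrac{K^2}{12}\,2^{-2J}$ rather than a looser constant multiple. No further analytic input is required beyond Lemma~\ref{lem1}, which we may assume as cited.
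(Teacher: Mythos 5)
The paper offers no proof of Lemma~\ref{lem2} at all --- it simply writes ``See \cite{ray2012haar}'' --- so your proposal is supplying an argument where the paper supplies none. Your skeleton (group the tail by resolution level, use orthogonality to reduce $\|E_J\|_2^2$ to a diagonal sum of squared coefficients weighted by $\|h_i\|_2^2=2^{-j}$, count $2^j$ wavelets per level, sum a geometric series) is indeed the standard route and the right one.

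However, the step you dismiss as ``purely bookkeeping'' is exactly where the argument breaks if you use the ingredients you cite. With the paper's Lemma~\ref{lem1}, $|c_i|\leq K2^{-(3j-2)/2}$, so $c_i^2\leq 4K^2 2^{-3j}$, and your diagonal sum becomes
\begin{equation*}
\|E_J\|_2^2 \;\leq\; \sum_{j=J+1}^{\infty} 2^{j}\cdot 4K^2 2^{-3j}\cdot 2^{-j} \;=\; 4K^2\sum_{j=J+1}^{\infty}2^{-3j} \;=\; \frac{4K^2}{7}\,2^{-3J},
\end{equation*}
which is \emph{not} $\frac{K^2}{12}2^{-2J}$; the rate is $2^{-3J}$ rather than $2^{-2J}$ and neither bound dominates the other for all $J$ (they cross near $J=3$). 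The mismatch is a normalization issue: the exponent in Lemma~\ref{lem1} belongs to the orthonormal convention $\tilde h_i = 2^{j/2}h_i$, while the orthogonality relation you invoke, $\int_0^1 h_ih_l\,dx = 2^{-j}\delta_{i,l}$, is for the unnormalized $\pm1$-valued wavelets used elsewhere in the paper. To land on exactly $\frac{K^2}{12}2^{-2J}$ you should not cite Lemma~\ref{lem1} but instead derive the coefficient bound directly in the paper's convention: from $c_i = 2^j\int_0^1 w h_i\,dx = 2^j\bigl(\int_{\eta_1}^{\eta_2}w - \int_{\eta_2}^{\eta_3}w\bigr)$ and the mean value theorem on the two half-supports of length $2^{-(j+1)}$, one gets $c_i=\tfrac12\bigl(w(\xi_1)-w(\xi_2)\bigr)$ with $|\xi_1-\xi_2|<2^{-j}$, hence $|c_i|\leq \tfrac{K}{2}2^{-j}$. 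Plugging that in gives $\|E_J\|_2^2\leq \frac{K^2}{4}\sum_{j>J}4^{-j}=\frac{K^2}{12}2^{-2J}$ as claimed. So the architecture of your proof is fine, but as written the conclusion does not follow from the stated inputs; you must replace (or re-normalize) the appeal to Lemma~\ref{lem1}.
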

\begin{proof}
	See \cite{ray2012haar}.
\end{proof}
\begin{theorem}
	Let $w(x)$ and $w_{2M}$ are the exact and approximated solution of the equation \eqref{eq9},then
	\begin{eqnarray}
	||E_{J}||_{2}=||w(x)-w_{2M}(x)||_{2}\leq2K\Big(\frac{2^{-(\frac{5}{2}(J+1)+1)}}{1-2^{-5/2}}\Big).
	\end{eqnarray}
\begin{proof}	
	\begin{eqnarray}
	\nonumber&&||E_{J}||^2_{2}=\int_{0}^{1}\Big[\sum_{j=J+1}^{\infty}\sum_{l=J+1}^{\infty}c_{2^j+k+1}\Big(p_{2,2^j+k+1}(x)-xp_{2,2^j+k+1}(1)\Big)\Big]^2dx\\
	&& =\sum_{j=J+1}^{\infty}\sum_{k=0}^{2^j-1}\sum_{l=J+1}^{\infty}\sum_{s=0}^{2^l-1}c_{2^j+k+1}c_{2^l+s+1}\int_{0}^{1}\Big[\Big(p_{2,2^j+k+1}(x)-xp_{2,2^j+k+1}(1)\Big) \\\label{er1}&&\hspace{3in}\Big(p_{2,2^l+s+1}(x)-xp_{2,2^l+s+1}(1)\Big)\Big]dx
	\end{eqnarray}
	First we will evaluate the  upper bound for the function $p_{2,i}(x)$ in all the  subinterval $[0,1]$. We have $p_{2,i}=0$ for $x\in[0,\eta_{1}]$. In the interval $[\eta_{1},\eta_{2}]$, function $p_{2,i}(x)$ is monotonic increasing and its maximum value can be obtained by putting $x=\eta_{2}$ and therefor
	 \begin{eqnarray}
	p_{2,i}(x)=p_{2,2^j+k+1}(x)\leq \frac{(\eta_{2}-\eta_{1})^2}{2}=\frac{1}{2}\Big(\frac{1}{2^{(j+1)}}\Big)^2,~~\forall x\in[\eta_{1},\eta_{2}].
	\end{eqnarray}
In the interval $[\eta_{2},\eta_{3}]$ it can be easily prove that function $p_{2,i}$ is monotonically increasing by using the equation \eqref{in1},\eqref{hq4} and the condition $\frac{\partial p_{2,i}(x)}{\partial x}>0$ if $x<\eta_{3}$ which is true. Hence $p_{2,i}(x)$  attains its maximum value at the end point $x=\eta_{3}$. Hence
\begin{eqnarray}
	p_{2,i}(x)=p_{2,2^j+k+1}(x)\leq \Big(\frac{1}{2^{(j+1)}}\Big)^2,~~\forall x\in[\eta_{2},\eta_{3}].
\end{eqnarray}
For the subinterval $[\eta_{3},1]$, $p_{2,i}(x)$ can be express as (Equation (22) \cite{majak2015convergence}) 
\begin{eqnarray}
	p_{2,i}(x)= p_{2,2^j+k+1}(x)=\Big(\frac{1}{2^{(j+1)}}\Big)^2. 
\end{eqnarray}
Thus the function $p_{2,i}(x)$ has an upper bound in $[0,1]$ given by
\begin{eqnarray}
p_{2,i}(x)=p_{2,2^j+k+1}(x)\leq\Big(\frac{1}{2^{(j+1)}}\Big)^2~\forall x\in[0,1].
\end{eqnarray}
Now, we have 
\begin{eqnarray}
\nonumber\Big(p_{2,i}(x)-xp_{2,i}(1)\Big)&\leq& |p_{2,i}(x)|+|x||p_{2,i}(1)|\\
\nonumber&\leq&|p_{2,i}(x)|+|p_{2,i}(1)|\\ \label{er2}
&\leq&2\Big(\frac{1}{2^{(j+1)}}\Big)^2.
\end{eqnarray}
Similarly, we have 
\begin{eqnarray}
\label{er3}
\Big(p_{2,l}(x)-xp_{2,l}(1)\Big)&\leq&2\Big(\frac{1}{2^{(l+1)}}\Big)^2.	
\end{eqnarray}
	Putting \eqref{er2} and \eqref{er3} in equation \eqref{er1}, we get
	\begin{eqnarray}
\nonumber	||E_{J}||^2_{2}&\leq& 4\sum_{j=J+1}^{\infty}\sum_{k=0}^{2^j-1}\sum_{l=J+1}^{\infty}\sum_{s=0}^{2^l-1}c_{2^j+k+1}c_{2^l+s+1}\Big(\frac{1}{2^{(j+1)}}\Big)^2\Big(\frac{1}{2^{(l+1)}}\Big)^2\\
\nonumber &\leq& 4K^{2}\sum_{j=J+1}^{\infty}\sum_{l=J+1}^{\infty}2^{-(\frac{3}{2}j-1)}2^j\Big(\frac{1}{2^{(j+1)}}\Big)^22^{-(\frac{3}{2}l-1)}2^l\Big(\frac{1}{2^{(l+1)}}\Big)^2 ~~~\text{by lemma \ref{lem1}} \\
\nonumber &\leq& 4K^2\sum_{j=J+1}^{\infty}2^{-(\frac{3}{2}l-1)}2^j\Big(\frac{1}{2^{(j+1)}}\Big)^2\frac{2^{-(\frac{5}{2}(J+1)+1)}}{1-2^{-5/2}}\\
\nonumber &\leq& 4K^2\Big(\frac{2^{-(\frac{5}{2}(J+1)+1)}}{1-2^{-5/2}}\Big)^2.
	\end{eqnarray}
	Hence
	\begin{eqnarray}
	\label{er4}
	||E_{J}||_{2}\leq2K\Big(\frac{2^{-(\frac{5}{2}(J+1)+1)}}{1-2^{-5/2}}\Big).
	\end{eqnarray}
\end{proof}
\end{theorem}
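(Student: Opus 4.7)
The plan is to expand the error $E_J(x) = w(x) - w_{2M}(x)$ as the tail of the Haar expansion \eqref{eq9}, square and integrate termwise to produce a quadruple sum, then bound the result via (i) a uniform upper bound on $p_{2,i}(x) - x\,p_{2,i}(1)$ on $[0,1]$ and (ii) the coefficient estimate $|c_i|\leq K\,2^{-(3j-2)/2}$ supplied by Lemma \ref{lem1}. After these substitutions the summand becomes independent of the translation indices, so the sum decouples into the square of a single geometric series in $j$, which evaluates explicitly.

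First I would record that
\begin{equation*}
E_J(x) = \sum_{j=J+1}^{\infty}\sum_{k=0}^{2^j-1} c_{2^j+k+1}\bigl[p_{2,2^j+k+1}(x) - x\,p_{2,2^j+k+1}(1)\bigr],
\end{equation*}
since the boundary contributions $f_1(t_{j+1})$, $f_2(t_{j+1})$ and the linear piece $x(f_2 - f_1)$ in \eqref{eq9} do not depend on the truncation level and therefore cancel between $w$ and $w_{2M}$. Squaring and integrating over $[0,1]$ then gives the quadruple sum displayed in \eqref{er1}.

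Next I would establish the pointwise estimate $p_{2,i}(x)\leq 2^{-2(j+1)}$ on all of $[0,1]$ by checking the four branches of \eqref{i2}. On $[0,\eta_1]$ the function vanishes; on $[\eta_1,\eta_2]$ it is a monotone-increasing quadratic whose maximum $\tfrac12\,2^{-2(j+1)}$ is attained at $\eta_2$; on $[\eta_2,\eta_3]$ a quick differentiation shows $\partial_x p_{2,i} \geq 0$ when $x\leq \eta_3$, so the maximum $2^{-2(j+1)}$ is attained at $\eta_3$; and on $[\eta_3,1]$ the function is constantly equal to $2^{-2(j+1)}$. Combining with the triangle inequality gives $|p_{2,i}(x) - x\,p_{2,i}(1)|\leq 2\cdot 2^{-2(j+1)}$, which is the bound needed inside the quadruple sum.

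Finally I would substitute this estimate together with Lemma \ref{lem1} into the quadruple sum. Since the resulting summand depends only on $j$ and $l$, the inner sums over $k$ and $s$ contribute factors of $2^j$ and $2^l$ respectively. The remaining double sum in $(j,l)$ factors as the square of $\sum_{j=J+1}^{\infty} 2^j\cdot 2^{-(3j-2)/2}\cdot 2^{-2(j+1)}$, which collapses to a geometric series of ratio $2^{-5/2}$ starting at $j = J+1$; summing it and taking the square root then yields \eqref{er4}. The main obstacle is bookkeeping — correctly aligning the powers of $2$ contributed by the $2^j$-sized translation sums, the Lemma \ref{lem1} coefficient bound, and the $p_{2,i}$ estimate — together with making sure the monotonicity check on $[\eta_2,\eta_3]$ is handled cleanly so that the uniform bound really does hold on all four subintervals.
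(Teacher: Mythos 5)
Your proposal takes essentially the same route as the paper's own proof: the identical tail expansion of \eqref{eq9} into the quadruple sum, the same four-branch pointwise bound $p_{2,i}(x)\leq 2^{-2(j+1)}$ followed by the triangle inequality, the same appeal to Lemma \ref{lem1} for the coefficients, and the same collapse of the double sum into the square of a geometric series of ratio $2^{-5/2}$. The argument is correct in the same sense and to the same level of rigor as the paper's version, so there is nothing further to add.
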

	It is clear that equation \eqref{er3} indecates that the error bound is propotional to the level of resolutions $J$ of the Haar wavelet. Also $||E_{J}||_{2}\rightarrow 0$ as $J\rightarrow \infty$. Thus the proposed scheme converges to the solutions as $J$ approaches to infinity.
\section{Numerical Illustration}
\label{sec8}
In this part, we measure the efficiency of the proposed method  by taking some example with the help of mean square root error norm $L_{2}$  and maximum error norm $L_{\infty}$ defined as
 \begin{eqnarray*}
 	L_{2}=\sqrt{\Delta x\sum_{j=0}^{2M}\mid w^{exact}_{j}-(w_{2M})_{j} \mid^{2}},~~~L_{\infty}=\Arrowvert w^{exact}-w_{2M} \Arrowvert _{\infty}= \mathop{max}_{j}\mid w^{exact}_{j}-(w_{2M})_{j} \mid,
 \end{eqnarray*}
where $w_{2M}$ is the approximated result by Haar Wavelet method.
\subsection{Test Problem 1}
\label{prob1}
Let us take $\mu=2,\delta=0$, in equation \eqref{eq1} and the BCs
\begin{eqnarray}
w(0,t)=0=w(1,t)~~t>1,
\end{eqnarray} 
 with the initial condition
\begin{eqnarray}
w(1,t)=\frac{x_{*}}{1+e^{\frac{1}{c_{0}}(\frac{x_{*}^2}{4\nu} )}}, ~x_{*}\in(0,1)
\end{eqnarray}
which is obtained from the exact solution \cite{bratsos2010fourth}
\begin{eqnarray}
w(x_{*},t)=\frac{\frac{x_{*}}{t}}{1+\sqrt{t}/c_{0}~e^{(\frac{x_{*}^2}{4\nu t})}}~t\geq 1, x_{*}\in(0,1)
\end{eqnarray}
where $0< c_{0}<1.$
\par For the comparision purpose, in table \ref{TN1},  we take $\Delta t=0.01,2M=16,\nu=0.01$ and $C_{0}=0.5$ and we compute the $L_{\infty}$-error and $L_{2}$-error at $T=2$ and $T=4$. We observe that the result  by present scheme is better than the existing result published in \cite{bratsos2018exponential}-\cite{saka2008numerical} even though $\Delta t$ and $\Delta x$ taken in present scheme is very large compare to the value of $\Delta t$ and $\Delta x$ taken in \cite{bratsos2018exponential}-\cite{saka2008numerical} for $\nu=0.01$. In table \ref{TN1.1}, the result are computed for small value of $\nu=0.001$ for $\Delta t=0.01$ at $T=2$ and $T=4$. Here we take $2M=32$ and $C_{0}=0.5$ and again compare the result obtained by the present method with the help of $L_{2}$ and $L_{\infty}$-error and observed that the error produced by the present method are less than the error produced by the method in the published work \cite{bratsos2018exponential}-\cite{saka2008numerical} for the value of $\Delta t$ and $\Delta x$ taken in present scheme which is very large compare to the value of $\Delta t$ and $\Delta x$ taken in \cite{bratsos2018exponential}-\cite{saka2008numerical}. \par In fig. \ref{F1}, we plot the numerical result (Left) for $\nu=0.005,2M=16$ and $C_{0}=0.5$ and find that the computed result follows the physical behaviour of the solution at different time for the time step $\Delta t=0.01$.  In fig \ref{F1} (Right) the absolute error is plotted and observe that the absolute error at different discrete point are less than $0.0005$ which are accetable. 
\begin{table}[!ht]
	\centering
	\caption{Comparision of numerical result with the existing result by the help of $L_{\infty}$ and $L_{2}$ error of the problem \eqref{prob1} at $T=2,4$ for $\nu=0.01,\Delta t=0.01$ and $C_{0}=0.5,J=3.$}
	\label{TN1}
	\begin{tabular}{lllllll}
		\hline
		&                &            & \multicolumn{2}{l}{T=2}   \hspace{1.5in}         & \multicolumn{2}{l}{T=4}            \\ 
	& &		&	\multicolumn{2}{l}{\noindent\rule{4.5cm}{0.6pt}}        & \multicolumn{2}{l}{\noindent\rule{4.5cm}{0.6pt}} \\
		$\nu=0.01$    & $\Delta x$ & $\Delta t$ & $L_{\infty}-error$ & $L_{2}-error$ & $L_{\infty}-error$ & $L_{2}-error$ \\ \hline
		Present       & 1/16           & 0.01       & 0.76E-03           & 0.347E-03     & 0.582E-03          & 0.311E-03     \\
		\cite{bratsos2018exponential} & 1/100          & 0.00001    & 0.81387E-03        & 0.38291E-03   & 0.60474E-03        & 0.31718E-03   \\
		\cite{ramadan2005numerical}   & 1/100          & 0.00001    & 1.21698E-03        & 0.52308E-03   & 0.93136E-03        & 0.51625E-03   \\
		\cite{ramadan12005numerical}   & 1/100          & 0.00001    & 1.70309E-03        & 0.79043E-03   & 0.99645E-03        & 0.55767E-03   \\
		\cite{saka2008numerical}       & 1/100          & 0.00001    & 0.81680E-03        & 0.37932E-03   & 0.60537E-03        & 0.31724E-03  \\ \hline
	\end{tabular}
\end{table}
\begin{table}[!ht]
	\centering
	\caption{Comparision of numerical result with the existing result by the help of $L_{\infty}$ and $L_{2}$ error of the problem \eqref{prob1} at $T=2,4$ for $\nu=0.01,\Delta t=0.01$ and $C_{0}=0.5,J=3.$}
	\label{TN1.1}
	\begin{tabular}{lllllll}
		\hline
	&                &            & \multicolumn{2}{l}{T=2}   \hspace{1.5in}         & \multicolumn{2}{l}{T=4}            \\ 
	& &		&	\multicolumn{2}{l}{\noindent\rule{4.5cm}{0.6pt}}        & \multicolumn{2}{l}{\noindent\rule{4.5cm}{0.6pt}} \\
	$\nu=0.001$    & $\Delta x$ & $\Delta t$ & $L_{\infty}-error$ & $L_{2}-error$ & $L_{\infty}-error$ & $L_{2}-error$ \\ \hline
		Present       & 1/32           & 0.01       & 0.2236E-03         & 0.054998E-03  & 0.1823E-03         & 0.575806E-03  \\
		\cite{bratsos2018exponential} & 1/100          & 0.00001    & 0.26595E-03        & 0.07173E-03   & 0.19549E-03        & 0.05727E-03   \\
		\cite{ramadan2005numerical}   & 1/100          & 0.00001    & 0.27967E-03        & 0.06703E-03   & 0.21856E-03        & 0.06670E-03   \\
		\cite{ramadan12005numerical} & 1/100          & 0.00001    & 0.81852E-03        & 0.18355E-03   & 0.35635E-03        & 0.11441E-03   \\
		\cite{saka2008numerical}      & 1/100          & 0.00001    & 0.26094E-03        & 0.06811E-03   & 0.19288E-03        & 0.05652E-03  \\ \hline
	\end{tabular}
\end{table}
\begin{figure}[!ht]
	\includegraphics[height=2.5in,width=3.5in]{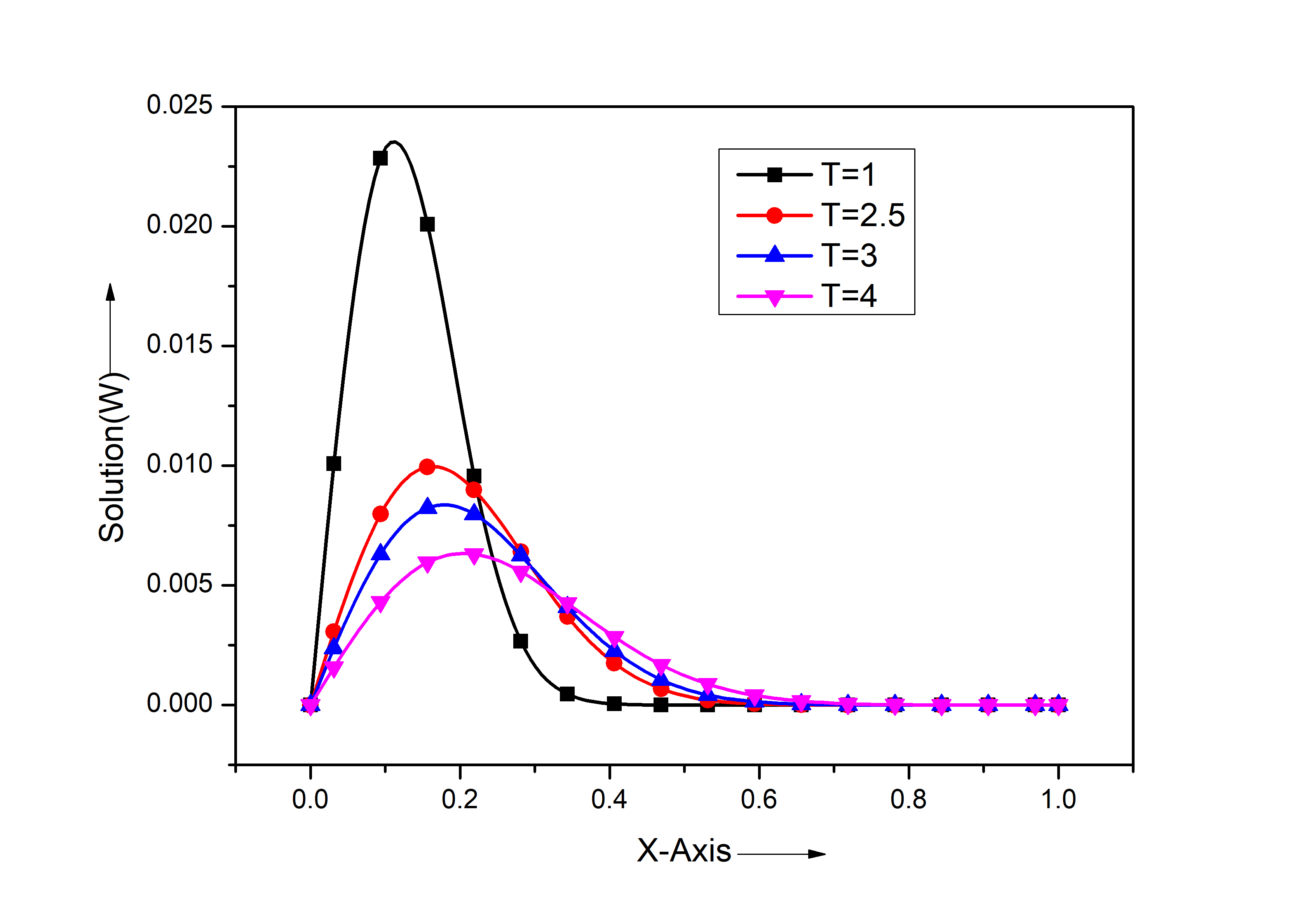}
	\includegraphics[height=2.5in,width=3.5in]{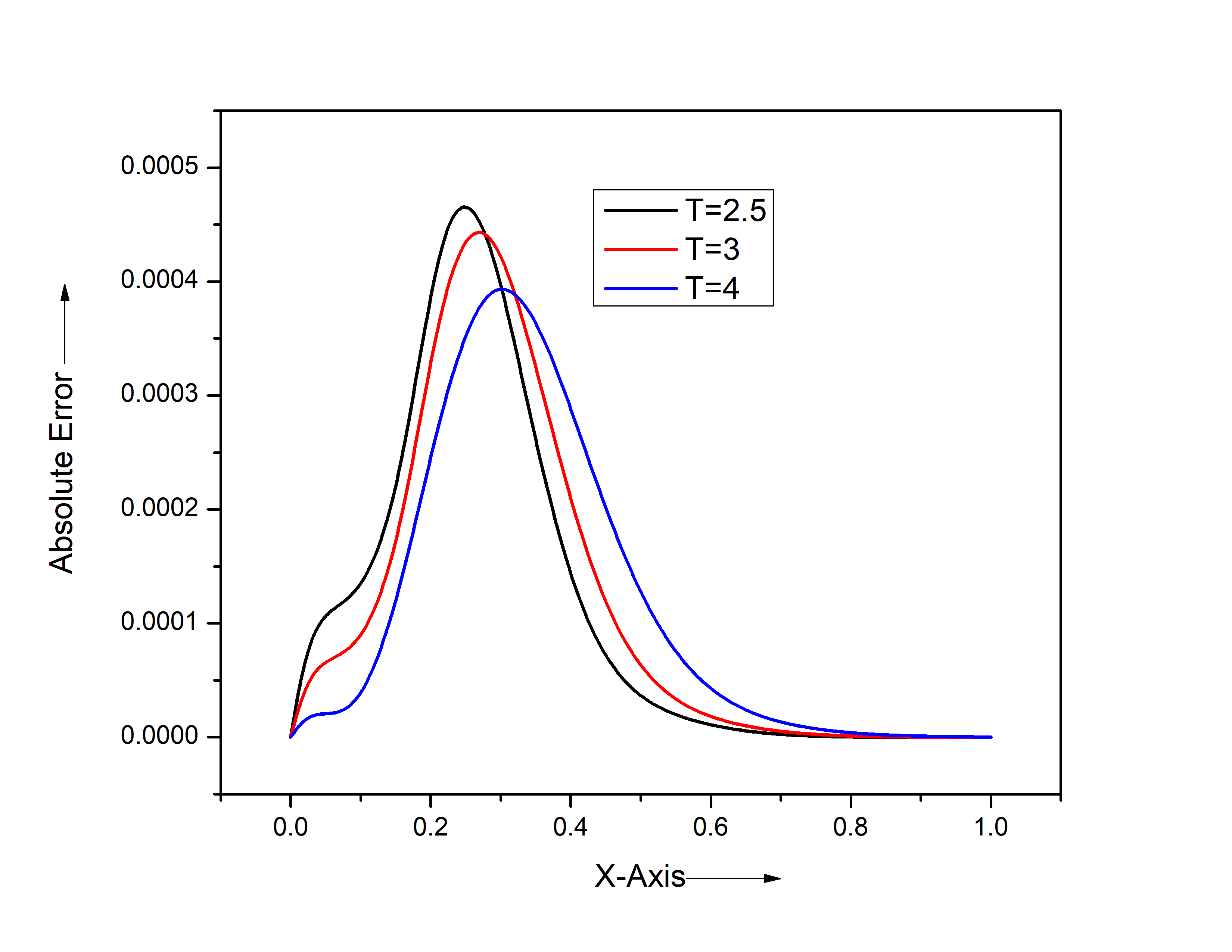}
	\caption{Numerical solution (left) and absolute error(right) for the problem \eqref{prob1} for the time step $\Delta t=0.01,~\nu=0.005$, $J=3$ at different time $T$.}
	\label{F1}
\end{figure}
\subsection{Test Problem 2}
\label{prob2}
 Let us take $\mu=1,\delta=1$  with the BCs
 \begin{eqnarray}
 w(0,t)=0=w(1,t)~~t>0
 \end{eqnarray}
 and  initial condition
 \begin{eqnarray}
 w(x_{*},0)=\frac{1-e^{x_{*}/\nu}+(e^{1/\nu}-1)x_{*}}{\sigma},~x_{*}\in(0,1),
 \end{eqnarray}
 which is obtained from the exact solution \cite{mickens1994nonstandard}
 \begin{eqnarray}
  w(x_{*},t)=\frac{1-e^{x_{*}/\nu}+(e^{1/\nu}-1)x_{*}}{(e^{1/\nu}-1)t+\sigma},t\geq 0,
 \end{eqnarray}
 where $\sigma >0$ is a parameter. 
 We compute the $L_{\infty} $ and $L_{2}$-norm error. In table \ref{TN2}, we summerize the result computed by the present method for $\nu=1,\sigma =2$ with the time step $\Delta t=0.001,0.01$ and $2M =8$ and $32$ at different time $T$. We can see in  table \ref{TN2} that as the value of $2M$ is increased, $L_{\infty}$ and $L_{2}$ error  decrease and hence we can say that  when $J\rightarrow \infty$  the error will tend to zero.\par In fig.\ref{F2} (Left), the exact solution and the numerical solution are ploted for the time step $\Delta t=0.001,\nu=1$, $ \sigma=1$ at different time $T$ and we see that the numrical result are very closed to the exact solution. In fig. \ref{F2} (Right) absolute error is ploted at  discrete point for the different time with the time step  $\Delta t=0.001,\nu=1$ and $ \sigma=1$. We see that the absolute error are very small and less than $0.0000012$ and is maximum near the point $0.7$  for all different time. In fig \ref{F2.2} (Left) we have graphed the numerical solution and exact solution of the problem \eqref{prob2} for small value of $\nu=0.01, \Delta t=0.01$ at different $T$ for $2M=32$. We can see that the numerical solution is almost same as the exact solution exept near the boundary point $1$. In fig \ref{F2.2} (Right), we have graphed the absolute error of the problem at different time $T$ for $\nu=0.01, \Delta t=0.01$ and $2M=32$. It is observed that the absolute error throughout the domain is almost zero except near the boundary point $1$. This error can be reduced by taking the more number of greed points as seen in the table $\ref{TN2}.$
\begin{table}[!ht]
	\centering
	\caption{$L_{\infty}$ and $L_{2}$ error of the problem \eqref{prob2} at different $T$ for $J=2,4,~\nu=1$ and $\sigma=2.$}
	\label{TN2}
	\begin{tabular}{lllll}
		\hline
		&       \hspace{0.75in}             & $T=0.01$   \hspace{0.75in}      & $T=0.1$     \hspace{0.75in}    & $T=0.2$         \\
		&                    & $\Delta t=0.001$ & $\Delta t=0.01$ & $\Delta t=0.01$ \\ \hline
		\multirow{2}{*}{$J=2$} & $L_{\infty}~error$ & 1.1533E-06
		      & 9.9506E-06      & 1.73036E-05     \\ 
		& $L_{2}~error$      & 8.18486E-07      & 7.00077E-06     & 1.22587E-05     \\ \hline
		\multirow{2}{*}{$J=4$} & $L_{\infty}~error$ & 7.31654E-08      & 6.26645E-07     & 1.09634E-06     \\ 
		& $L_{2}~error$      & 5.12615E-08      & 4.40074E-07     & 7.72171E-07  \\  \hline
	\end{tabular}
\end{table}
\begin{figure}[!ht]
	\includegraphics[height=2.5in,width=3.5in]{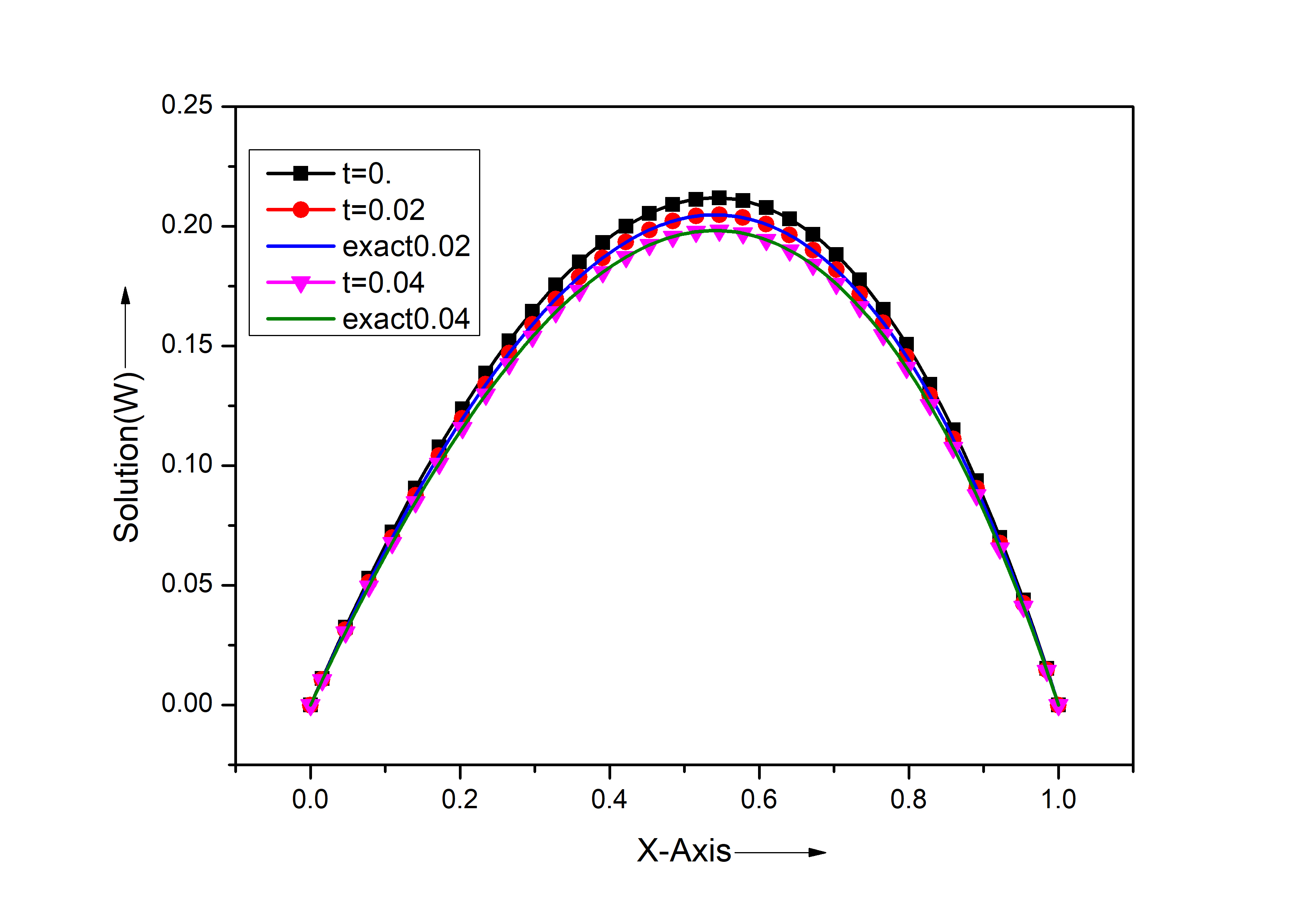}
	\includegraphics[height=2.5in,width=3.5in]{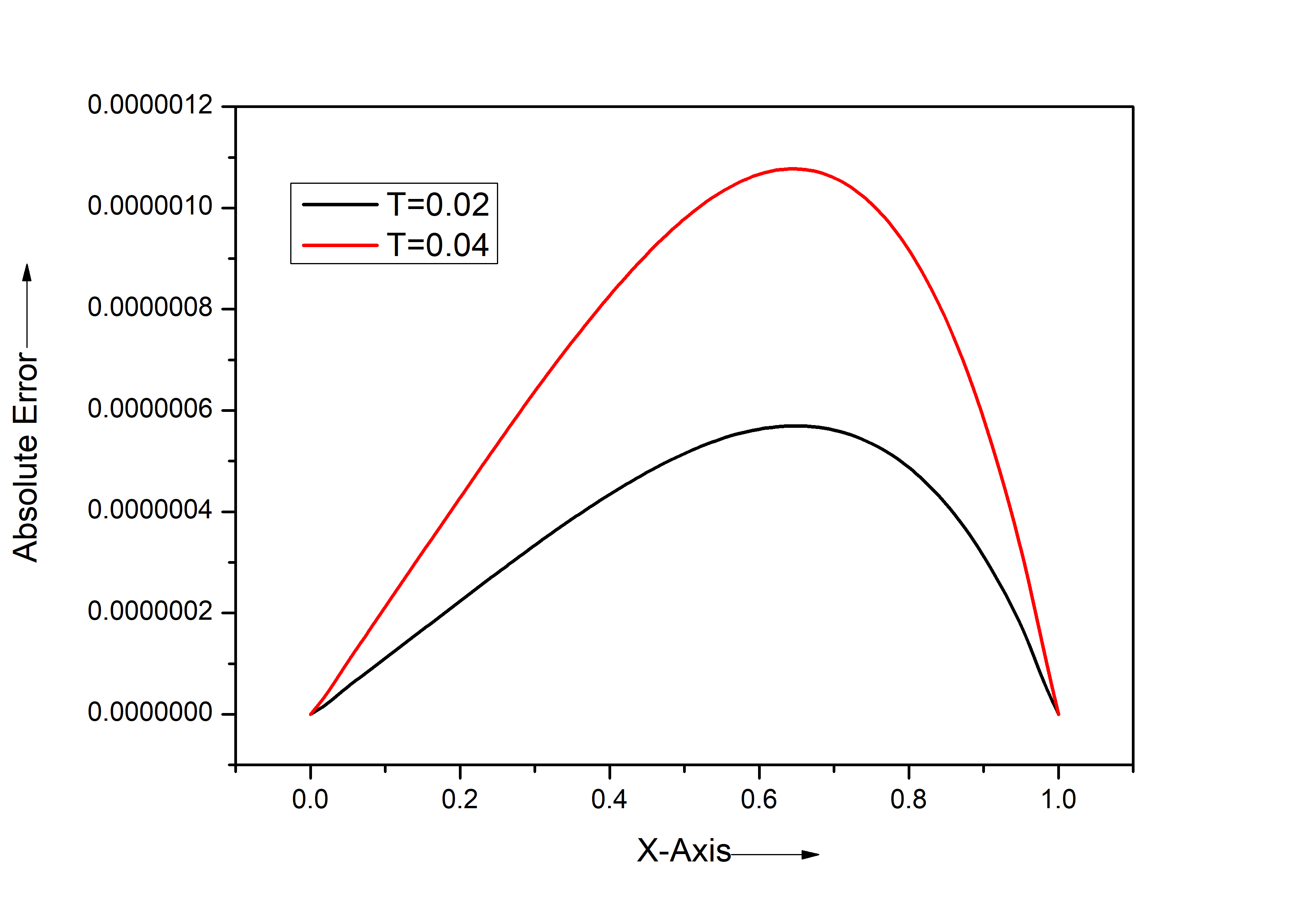}
	\caption{Numerical solution (left) and absolute error (right) for the problem \eqref{prob2} with $\Delta t=0.001,~\nu=1,~\sigma=1$ at different time $T$ for $J=4$.}
	\label{F2}
	\end{figure}
\begin{figure}[!ht]
	\includegraphics[height=2.5in,width=3.5in]{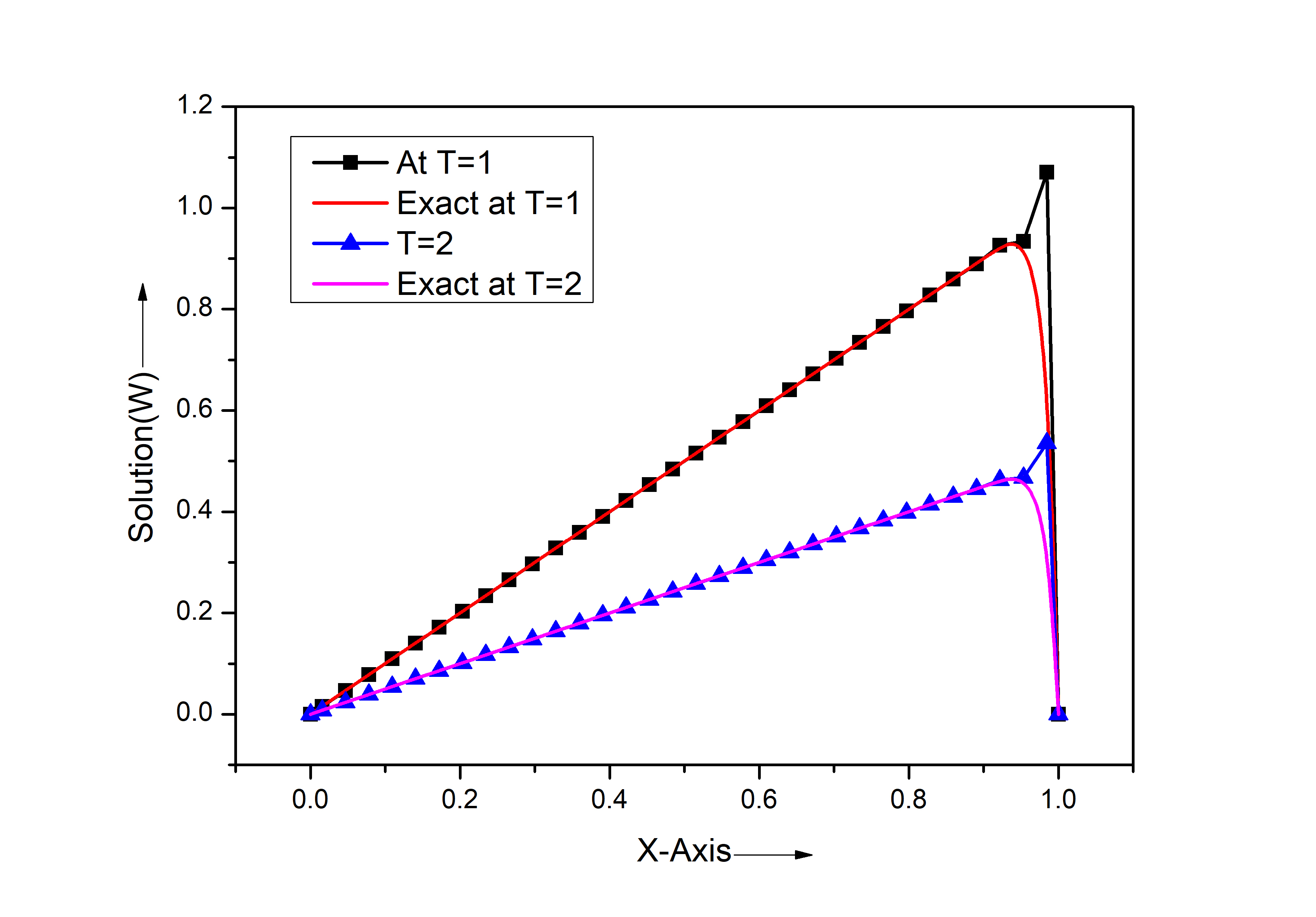}
	\includegraphics[height=2.5in,width=3.5in]{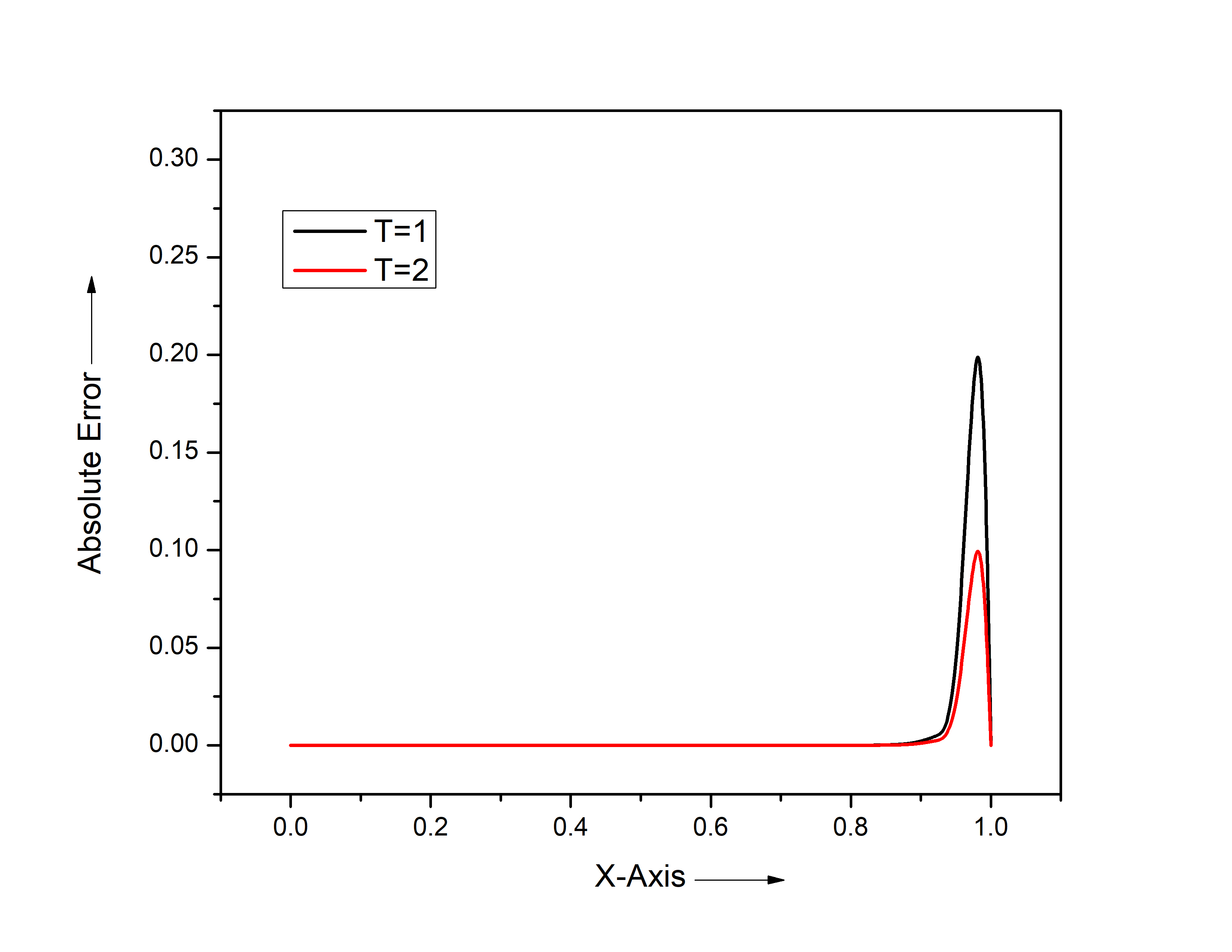}
	\caption{Numerical solution and exact solution (left) and absolute error (right) for the problem \eqref{prob2} with $\Delta t=0.01,~\nu=0.01,~\sigma=4$ at different time $T$ for $J=4$.}
	\label{F2.2}
\end{figure}
 \subsection{Test Problem 3}
 \label{prob3}
 Here we take $\mu=1,\delta=0$, then equation $\eqref{eq1}$ becomes one dimensional Burgers' equation. We take BCs
\begin{eqnarray}
w(0,t)=0=w(1,t),~~~t>0,
\end{eqnarray} 
and the initial conditions
\begin{eqnarray}
w(x_{*},0)=\frac{2\pi \nu  \sin(\pi x_{*})}{\sigma +\cos(\pi x_{*})}, ~x_{*}\in(0,1),
\end{eqnarray}
which is obtained from the exact solution \cite{asaithambi2010numerical}
\begin{eqnarray}
w(x_{*},t)=\frac{2\pi \nu e^{-\pi^2\nu t} \sin(\pi x_{*})}{\sigma +e^{-\pi^2\nu t}\cos(\pi x_{*})}, ~x_{*}\in(0,1),
\end{eqnarray}
where $\sigma >1$ is a parameter. \par  The numerical results of the example for $\nu=0.01$ are presented in table \ref{TN3} with the time step $\Delta t=0.01$ at $T=1$ for the parameter $\sigma=100$. Table summerize $L_{\infty}$ and $L_{2}$-error norm. The result by the present method is compared with the result published in \cite{rahman2010some} and \cite{mittal2012numerical} and is found that result is much better than the result in \cite{rahman2010some} and \cite{mittal2012numerical}. In the present method the spacial step size $\Delta x$ is greater than the spacial step size taken in \cite{rahman2010some} and \cite{mittal2012numerical} and found that the error is comparatively small. In fig. \ref{F3} (Left) numerical solution and exact solution is depicated for small value of $\nu=0.005$ at differnt $T$ with the time step $\Delta t=0.01$ and $\sigma =4$. It is observed that numerical result are very closed to the exact solution. In fig.\ref{F3} (Right) the absolute error is ploted for different $T$. It can been seen that the absolute error are very small and less than $0.0000010$ which is acceptable.
\begin{table}[!ht]
	\centering
	\caption{Comparision of numerical result with the existing result by the help of $L_{\infty}$ and $L_{2}$ error of the problem \eqref{prob3} at $T=1$ for $\nu=0.01,\Delta t=0.01$ and $\sigma=100.$}
	\label{TN3}
	\begin{tabular}{llllllll}
		\hline
		& \multicolumn{2}{l}{Kaysar\cite{rahman2010some}} & \multicolumn{2}{l}{Mittal and Jain \cite{mittal2012numerical}} &    & \multicolumn{2}{l}{Present} \\ 
	&	\multicolumn{2}{l}{\noindent\rule{4.0cm}{0.6pt}}        & \multicolumn{2}{l}{\noindent\rule{4.0cm}{0.6pt}} &\multicolumn{3}{l}{\noindent\rule{6.0cm}{0.6pt}}\\ 
		$\Delta x$ \hspace{0.2in} & $L_{2}-error$     & $L_{\infty}-error$ \hspace{0.4in}& $L_{2}-error$     & $L_{\infty}-error$ \hspace{0.4in}& $\Delta x$ & $L_{2}-error$      & $L_{\infty}-error$ \\  \hline
		1/10 & 3.4545E-07  & 4.8808E-07   & 3.2840E-07  & 4.6280E-07   &1/ 8  & 2.52147E-07  & 3.58275E-07  \\
		1/20 & 1.0124E-07  & 1.4305E-07   & 8.1921E-08  & 1.1640E-07   & 1/16 & 6.35077E-08  & 9.02969E-08  \\ 
		1/40 & 4.0028E-08  & 5.6677E-08   & 2.0470E-08  & 2.9068E-08   & 1/32 & 1.59079E-08  & 2.26455E-08  \\ 
		1/80 & 4.0028E-08  & 3.4992E-08   & 5.1194E-09  & 7.2706E-09   & 1/64 & 3.98117E-09  & 5.66586E-09  \\ \hline
	\end{tabular}
\end{table}
\begin{figure}[!ht]
	\includegraphics[height=2.5in,width=3.5in]{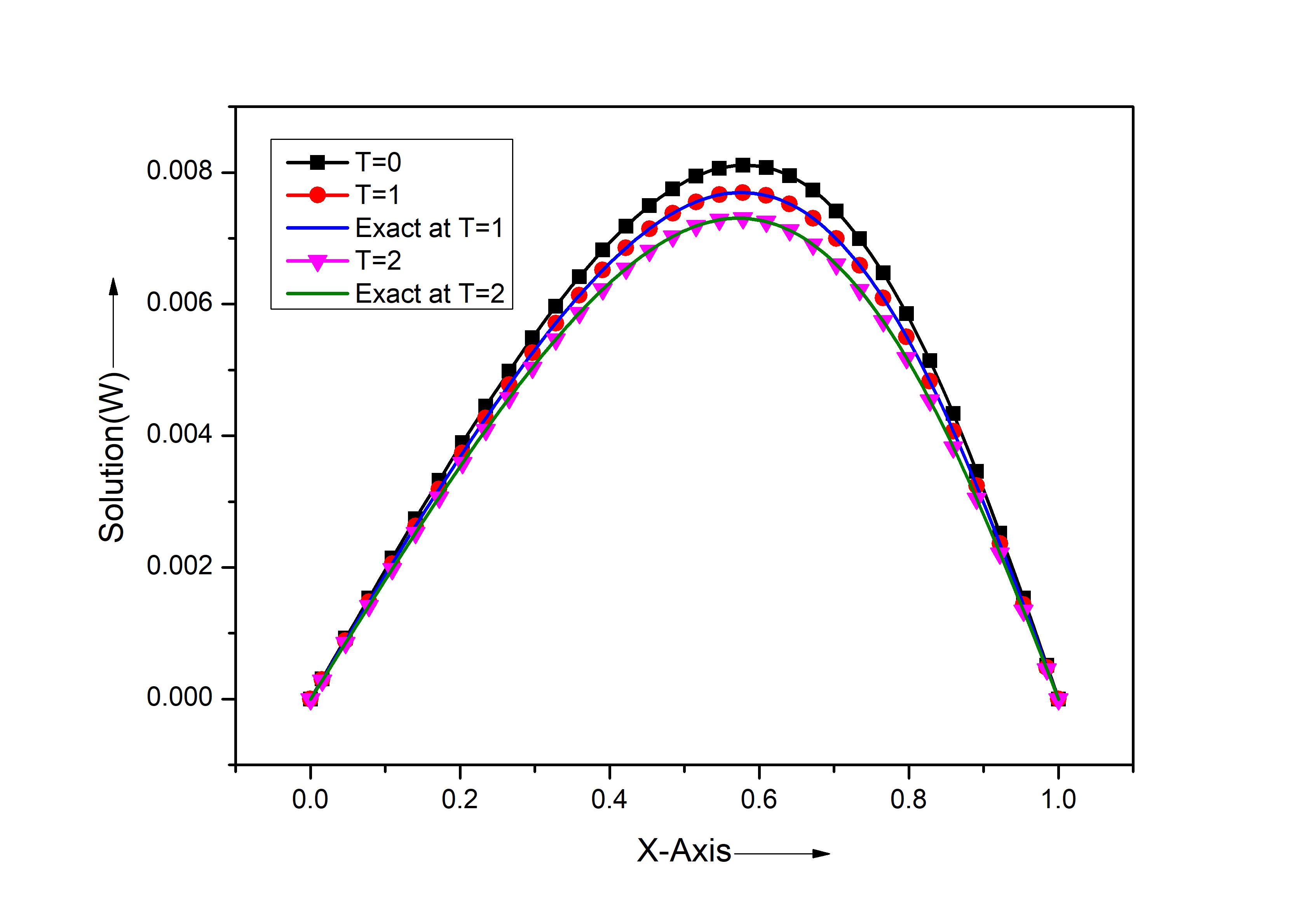}
	\includegraphics[height=2.5in,width=3.5in]{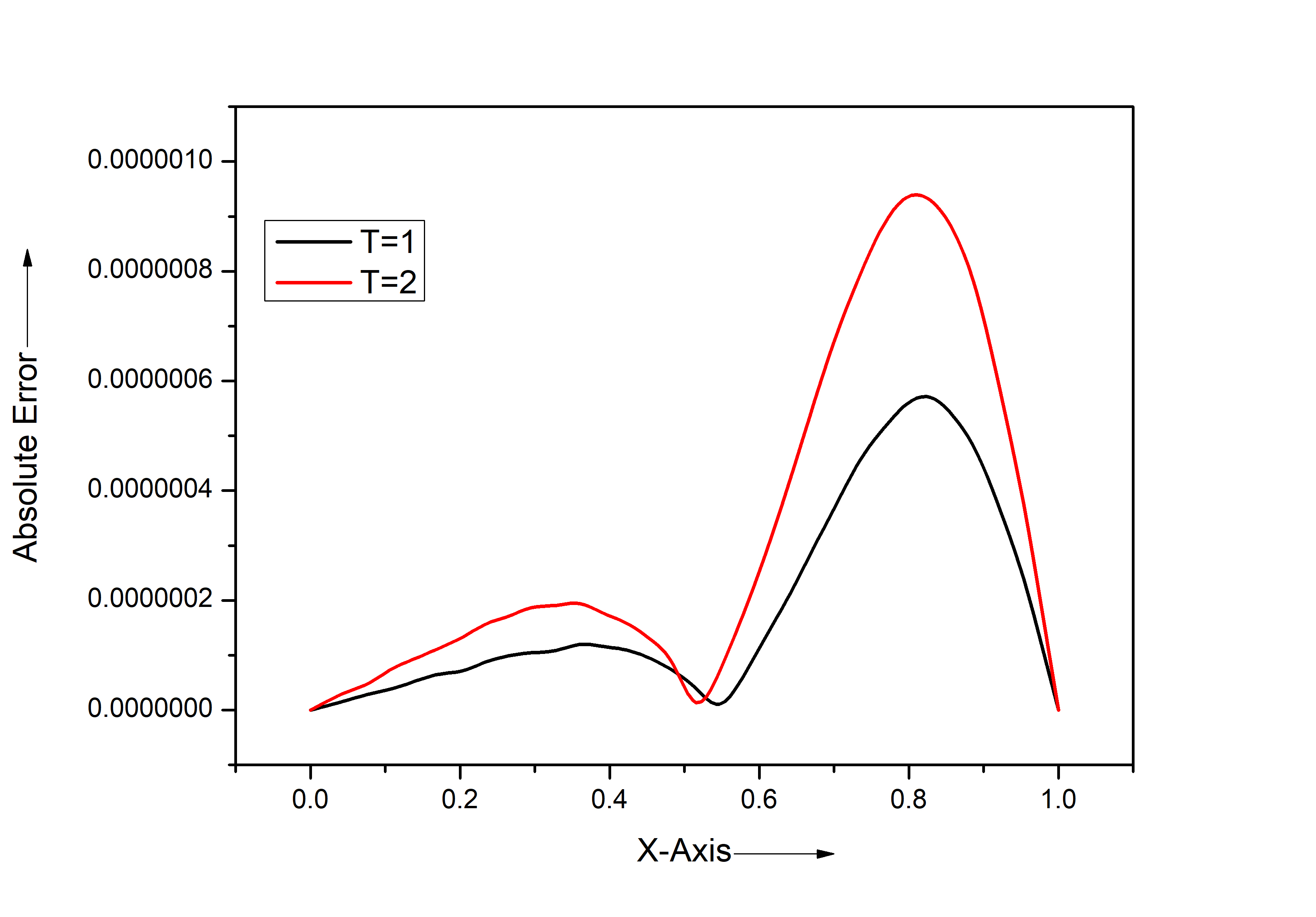}
	\caption{Numerical solution (left) and absolute error (right) for the problem \eqref{prob3}  with $\nu=0.005,~\Delta t=0.01,~\sigma=4$, at different time $T$ for $J=4$.}
	\label{F3}
\end{figure}
\subsection{Test Problem 4}
\label{prob4}
Here we take $\mu=2,\delta=1 $, then the equation \eqref{eq1} becomes 
\begin{eqnarray}
w_{t}+w^2w_{x_{*}}= \nu w w_{x_{*}^2}.
\end{eqnarray}
Let us take boundary condition 
\begin{eqnarray}
w(0,t)=0=w(5,t) ~~t>0,
\end{eqnarray}
with initial condition
\begin{eqnarray}
w(x_{*},0)=sin(\pi x_{*})
\end{eqnarray}
To the best of our knowledge the analytical solution of the problem \eqref{prob4} does not exist in the literature but the existence of solution is discussed in \cite{tersenov1999solvability}. We dont have exact solution so we only compute numerical result and plot in figure. In fig.\ref{F4} (Left) We plot  the numerical result of the problem \ref{prob4} for different values of $\nu $  at $T=0.1$ and time step $\Delta t=0.01$ and $2M=32$.  It is observed that the numerical solution of the problem follows the physical behaviour of the solution for all values of $\nu$. In fig.\ref{F4} (Right) numerical solution are ploted at different time $T$ for small values of $\nu=0.005$  and obsrved that it also follows physical behaviour of the solution.
\begin{figure}[!ht]
\includegraphics[height=2.5in,width=3.5in]{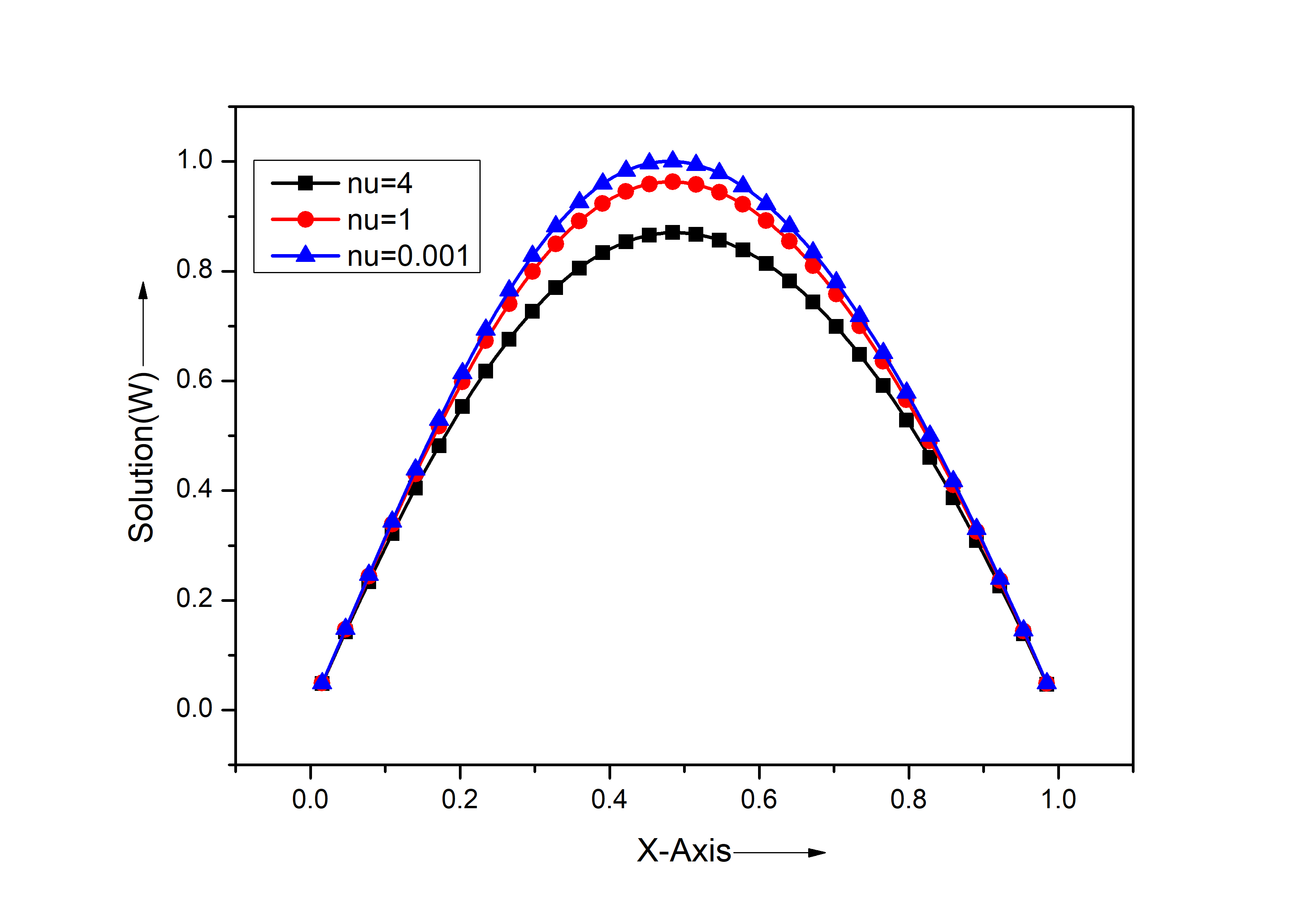}
\includegraphics[height=2.5in,width=3.5in]{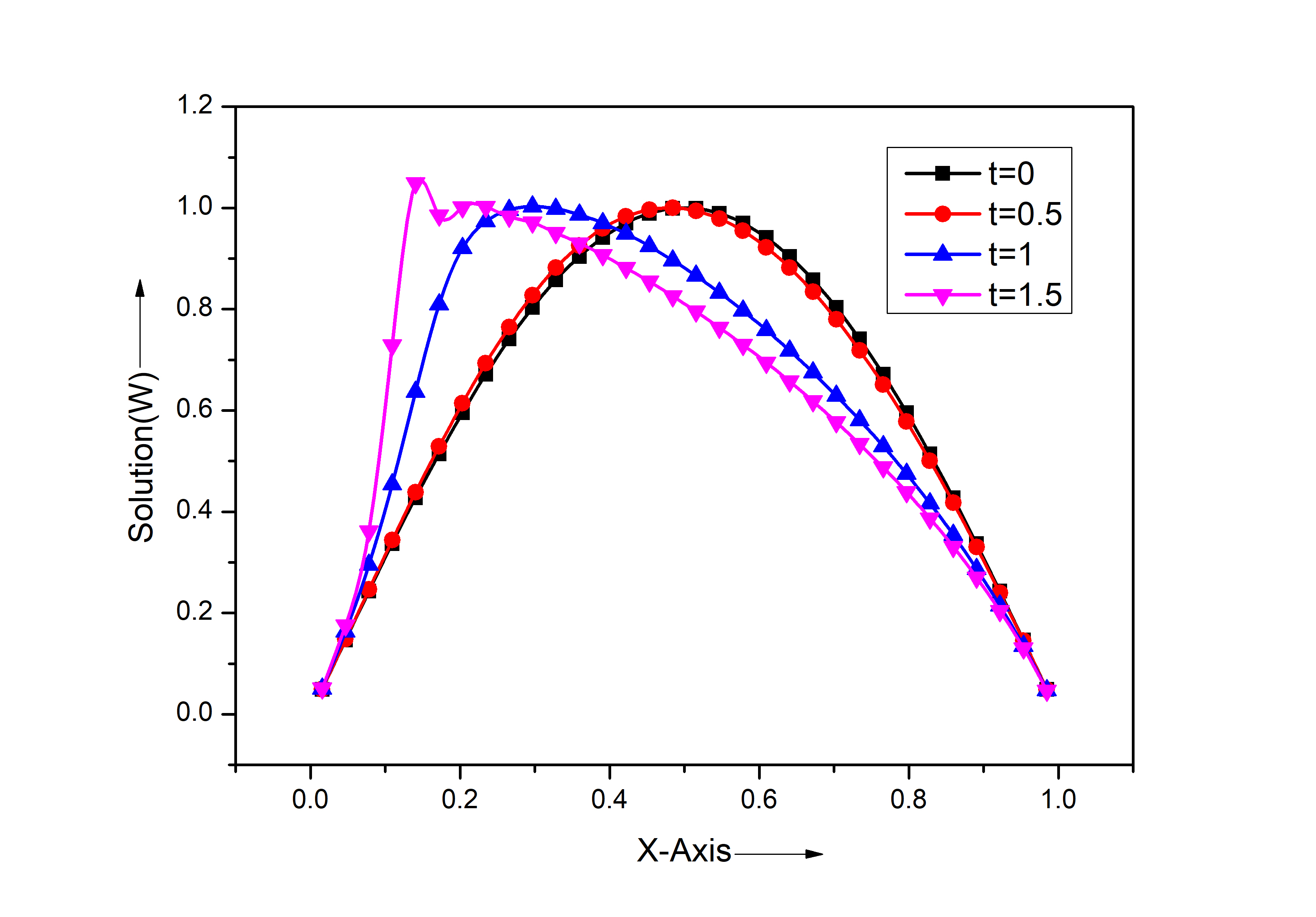}
	\caption{Numerical solution of the problem \eqref{prob4} at different $\nu $ and  $T=0.1 $ with $\Delta t=0.01$ (left)  and for the problem \eqref{prob4} at different time $T, \nu=0.005$ (right) for $J=4$.}
	\label{F4}
\end{figure}
\section{Conclusion}
\label{sec9}
In this work, Haar wavelet with the combination of quasilinearization and finite forward difference which involves averaging is discussed. The performance of the  present method is shown by testing the method over several examples and accuracy of the method is measured by $L_{2}$ and $L_{\infty}$-error norm. It is observed that the present method gives better accuracy than the result published in the literature even for small number of  grid points. Based on the performance of the present method, it is observed that the our method is competitive with the existing method such as finite difference, finite element etc., and this method can be also used for different type of PDEs that models real life problems in different field of engineering and science. 
\bibliographystyle{unsrt}
\bibliography{wavelet1}
\end{document}